\newtheorem{theorem}{Theorem}[section]
\newtheorem{corollary}[theorem]{Corollary}
\newtheorem{lemma}[theorem]{Lemma}
\newtheorem{proposition}[theorem]{Proposition}
\theoremstyle{definition}
\newtheorem{definition}[theorem]{Definition}
\theoremstyle{remark}
\newtheorem{remark}[theorem]{Remark}
\newcommand{\Ob}{\operatorname{Ob}}
\newcommand{\Hom}{\operatorname{Hom}}
\newcommand{\Mor}{\operatorname{Mor}}
\newcommand{\cat}[1]{\mathbf{#1}}
\newcommand{\norm}[1]{\|{#1}\|}
\newcommand{\subfield}{\mathbb{K}}
\title[A formula for the categorical magnitude]{A formula for the categorical magnitude in terms of the Moore-Penrose pseudoinverse}
\author{Stephanie Chen}
\address{California Institute of Technology, Pasadena, CA 91125, USA.}
\email{schen7@caltech.edu}
\author{Juan Pablo Vigneaux}
\address{1200 E California Blvd, Math Dept. 253-37, California Institute of Technology, Pasadena, CA 91125, USA.}
\email{vigneaux@caltech.edu}
\subjclass[2020]{18D99, 15A10} 
\keywords{category theory, magnitude, Euler characteristic, Moore-Penrose pseudoinverse, M\"obius inversion}
\thanks{Declarations of interest: none}
\date{\today}
\begin{document}

\maketitle

\begin{abstract}
    The magnitude of finite categories is a  generalization of the Euler characteristic. It is defined using the coarse incidence algebra of rational-valued functions on the given finite category, and a distinguished element in this algebra: the Dirichlet zeta function. The incidence algebra may be identified with the algebra of $n \times n$ matrices over the rational numbers, where $n$ is the cardinality of the underlying object set. The Moore-Penrose pseudoinverse of a matrix is a generalization of the inverse; it  exists and is unique for any given matrix over the complex numbers. In this article, we derive a new method for calculating the magnitude of a finite category,  using the pseudoinverse of the matrix that corresponds to the zeta function. The magnitude equals the sum of the entries of this pseudoinverse.
\end{abstract}

\section{Introduction}
In number theory, given two  functions $f,g:\mathbb N \to \mathbb C$ such that
\begin{equation}\label{eq:mobius_direct}
    \forall n\geq 1, \quad f(n) = \sum_{d|n} g(d),
\end{equation}
 a procedure known as \emph{Möbius inversion} expresses $g$ as a function of $f$, 
\begin{equation}\label{eq:mobius_inverse}
    \forall n\geq 1, \quad g(n) =\sum_{d|n}  f(d)\mu(d,n),
\end{equation}
by means of certain integer coefficients $\mu(d,n)$. Gian-Carlo Rota \cite{Rota} extended this technique to partially ordered sets (posets), provided they are locally finite;\footnote{This means that every  interval $[a,b] :=\{ x\in P\,:\, a\leq x\leq b\}$ is a finite set.} the case above corresponds to the poset of positive integers ordered by divisibility. More explicitly, given a locally finite poset $(P,\leq)$, one considers the algebra $R(P)$ of rational-valued functions $\xi, \,\psi,...$ on $P\times P$, with product given by $(\xi \psi)(a,b)=\sum_{c\,: a\leq c\leq b} \xi(a,c)\psi(c,b)$,  and a distinguished element $\zeta \in R(P)$, called the Dirichlet zeta function, defined by $\zeta(x,y) = 1$ if $x\leq y$ and $\zeta(x,y) =  0$ otherwise. Equation \eqref{eq:mobius_direct} above thus corresponds to a particular case of $\phi = \gamma \zeta$, where $\phi (m,n) := 1_{\{m=1\}}f(n)$ and $\gamma(m,n):= 1_{\{m=1\}} g(n)$ for all $m,n\in \mathbb N$. The inverse $\mu=\zeta^{-1}$ in $R(A)$ is the \emph{M\"obius function}; in particular, \eqref{eq:mobius_inverse} corresponds to $\gamma = \phi  \mu$ (see Prop. 3 and Ex. 3 in \cite[Sec. 3]{Rota}). One might show that $\mu(x,y)=0$ whenever $x\not\leq y$ \cite[Sec. 3, Prop. 1]{Rota}, hence inversion respects the ``incidence'' relations in $P$.  Moreover, if the poset $P$ is finite and has an initial object $0$ and terminal object $1$, Rota defined its \emph{Euler characteristic} $E:=1+\mu(0,1)$ and proved that it equals the alternating sum $1-\sum_{i=2}^\infty (-1)^i C_i$ of the numbers $C_i$ of  chains with $i$ elements stretched between $0$ and $1$ \cite[Sec. 3, Prop.~6]{Rota}. 

More recently, Tom Leinster revisited Rota's work and extended the notion of Euler characteristic to finite ordinary categories  \cite{Leinster}. Given such a category $\cat A$ (with a finite number of objects and morphisms), the incidence algebra $R(\cat A)$ consists of rational-valued functions on $\Ob(\cat A)\times \Ob(\cat A)$, again equipped with convolution product (see \eqref{eq: incidence_mult} below), and the zeta function satisfies $\zeta(a,b)=|\Hom(a,b)|$, where $|\cdot|$ denotes cardinality. Since every poset can be seen as a category, with $x\leq y$ translating into a unique morphism from $x$ to $y$, the definitions are compatible with Rota's.\footnote{One should point out that Rota's inversion is \emph{local}, in the sense that $\mu(x,y)$ can be computed inverting the restriction of $\zeta$ to any interval $[a,b]$ containing $x$ and $y$, see \cite[Prop. 4]{Rota}.} There is an important difference, though: in this more general case, the element $\zeta$ is not necessarily invertible in $R(\cat A)$. To deal with this problem, Leinster introduced the notions of weighting and a coweighting (see Definition \ref{def:weightings} below) and proved that if both a weighting and coweighting exist, their ``total weight'' coincides and is, by definition,  the \emph{Euler characteristic} or \emph{magnitude} $\chi(\cat A)$ of the category. It coincides with the topological Euler characteristic of the nerve of $\cat A$, provided the latter is well defined (see Remark \ref{rmk:topology}). 
If $\cat P$ is a finite poset with $0$ and $1$, seen as a category, and $\cat P'$ is the full subcategory without $0$ and $1$, then by direct computation one might prove that $\chi(\cat P')=1+\mu(0,1)$, where $\mu$ is the M\"obius inverse of $\cat P$, hence one recovers Rota's characteristic, see \cite[Prop. 4.5]{Leinster}. 

Upon enumerating  the objects of a finite category $\cat A$ (set $n=|\Ob (\cat A)|$), there is an algebra homomorphism between $R(\cat A)$ and the matrices $\text{Mat}_{n \times  n}(\mathbb Q)$ equipped with the usual matricial product. Every matrix with complex coefficients has a unique \emph{Moore-Penrose pseudoinverse}, characterized by a system of equations (see Theorem \ref{thm: penrose}). In particular, the $\zeta$ function is represented by a matrix $Z$, whose pseudoinverse  $Z^+$ is the matricial representation of a \emph{pseudo-M\"obius function}. We shall prove that one might use $Z^+$ to check if a given finite category has magnitude and subsequently compute it. The magnitude  arises as the sum of all the entries of the  pseudoinverse matrix $Z^+$, see Corollary \ref{cor:computation_Euler_char} below. This approach also yields new proofs of the identities such as $\chi(\cat A\times\cat  B) = \chi(\cat A)\chi(\cat B)$ and $\chi(\cat A+\cat B)=\chi(\cat A)+\chi(\cat B)$, based on standard matricial operations.

Our main result, Theorem \ref{thm:formula_magnitude_matrix}, gives a formula for the computation of the magnitude of a \emph{matrix} in terms of its pseudoinverse and as such applies in a broader generality. For instance, Leinster  also introduced the magnitude of enriched categories \cite{leinster2013magnitude}; in this case, each hom-set is an object of a monoidal category $(\cat V,\otimes)$, and the zeta function is given by 
 $\zeta(a,b)=||\Hom (a,b)||$, where $||\cdot||$ is now any monoid homomorphism from $(\cat V,\otimes)$ to the multiplicative monoid of a semiring $\subfield$,  invariant under isomorphisms in $\cat V$. Again, this zeta function might be represented as a matrix $Z$ with coefficients in $\subfield$, and it is possible to introduce (co)weightings and the magnitude as in the case of ordinary categories. When $\subfield$ is a subfield of $\mathbb C$,  the pseudoinverse $Z^+$ gives explicit formulas for the weightings and coweightings, provided they exist, and hence a formula for the magnitude of enriched categories. Our proofs at the end of Section \ref{sec:magnitude_via_pseudoinversion}, concerning the algebraic properties of the magnitude,  are also valid in this enriched setting. In particular, the results discussed in this paper are applicable to finite metric spaces, for which the matrix $Z$, called the similarity matrix in \cite{leinster2013magnitude}, is real-valued. 
 
The results presented here were part of an undergraduate research project conducted by Stephanie  Chen under the mentorship of Juan Pablo Vigneaux in the summer and fall of 2022. When we were about to make this text public, Leinster pointed out to us that Mustafa Akkaya and \"Ozg\"un \"Unl\"u had independently arrived at the formula for the magnitude of ordinary finite categories in terms of the Moore-Penrose pseudoinverse of the zeta function in \cite{akkaya2023euler}.  The formula makes sense for any finite category, hence it can be taken as a generalization of Leinster's definition;  Akkaya and \"Ozg\"un  prove that this generalized magnitude is also invariant under equivalence of categories. 

The rest of this article is organized as follows. In Sections 2 and 3, we give the requisite background on the magnitude of finite categories and the Moore-Penrose pseudoinverse. We present our main results in Section 4. 

\section{Magnitude of a finite category}\label{sec:magnitude}

In what follows, $\cat{A}$ is assumed to be a finite category with $|\Ob(\cat{A})| = n$ unless otherwise stated. 

\begin{definition}
Let $\cat{A}$ be a finite category. Its (coarse) \emph{incidence algebra} is the set 
$
   R(\cat{A})$ of functions  $ f:  \text{Ob}(\cat{A}) \times \text{Ob}(\cat{A}) \to \mathbb{Q}$, which is a rational vector space under pointwise addition and scalar multiplication of functions, equipped with the  \emph{convolution product}:
\begin{equation}
\label{eq: incidence_mult}
   \forall \theta, \phi \in R(\cat{A}), \, \forall a,c \in\Ob(\cat{A}),\quad   (\theta \phi)(a,c) = \sum_{b \in\Ob(\cat{A})} \theta(a,b)\phi(b,c).
\end{equation}
 The identity element for this product is  Kronecker's delta function $\delta$; it is given by the formula
\begin{equation}
    \forall a,b\in \text{Ob}(\cat A), \quad \delta(a,b) = \begin{cases}
        1& \text{ if } a=b\\
        0& \text{otherwise}
    \end{cases}.
\end{equation}
\end{definition}
\begin{remark}\label{rem:incidence_algs}
This terminology comes from \cite{notionsMobius}; it might be confusing because the arrows in the category (i.e. the ``incidence relations'') play no role.  There is a similar algebra of functions $f:\Mor \cat{A} \to \mathbb Q$, called the \emph{fine incidence algebra}. In the case of posets, which are the subject of  Rota's theory \cite{Rota}, the fine incidence algebra is a subalgebra of the coarser one; also, the   zeta function introduced below belongs to the fine algebra and its inverse, the M\"obius function, too  \cite[Thm. 4.1]{Leinster}. For the purposes of this note, only the coarse incidence algebra is needed and we refer to it simply as \emph{incidence algebra}.
\end{remark}

Note that as $\cat{A}$ is a finite category, its underlying object class is a finite set and thus we may fix an ordering on its objects. Suppose that $|\Ob(\cat{A})| = n$, and $\Ob(\cat{A}) = \{x_1, \ldots, x_n\}$. For any $f \in R(\cat{A})$, we may define a corresponding matrix $F \in \text{Mat}_{n \times  n}(\mathbb{Q})$ by $F_{i,j} = f(x_i,x_j)$. In particular, the $\delta$ function corresponds to the $n\times n$ identity matrix. Conversely, any matrix $G \in \text{Mat}_{n \times  n}(\mathbb{Q})$ determines an element $g \in R(\cat{A})$ by $g(x_i,x_j) = G_{i,j}$.
Moreover, the multiplication operation given by equation (\ref{eq: incidence_mult}) agrees with matrix multiplication: given $f,g \in R(\cat{A})$ with $F,G$ their corresponding matrices, we then have that $FG$ is the corresponding matrix of $fg$. Thus, after fixing an ordering of the objects of $\cat{A}$, we may identify $R(\cat{A})$ with the algebra of $(n\times n)$-matrices over $\mathbb{Q}$. We then have that an element $f \in R(\cat{A})$ is invertible (with two-sided inverse) if and only if its corresponding matrix $F$ is an invertible matrix.

\begin{definition}
The \emph{zeta function} $\zeta$ is an element of $ R(\cat{A})$ given  by $\zeta(a,b) = |\text{Hom}(a,b)|$ for all $a,b \in\Ob(\cat{A})$. If $\zeta$ is invertible (with two-sided inverse) in $R(\cat{A})$, we say that $\cat{A}$ \emph{has M\"{o}bius inversion} and denote its inverse, the \emph{M\"obius function}, by $\mu = \zeta^{-1}$. 
\end{definition}
\begin{remark}
Let $Z$ be the corresponding matrix of $\zeta$. Then, $\cat{A}$ has M\"{o}bius inversion if and only if $Z$ is an invertible matrix, in which case  $Z^{-1}$ corresponds to $\mu$.
\end{remark}

\begin{definition}(Incidence weightings)\label{def:weightings}
A \emph{weighting} on $\cat{A}$ is a function $k^{\bullet}: \Ob(\cat{A}) \to \mathbb{Q}$ such that 
\begin{equation}
\label{eq: weighting}
\sum_{b \in\Ob(\cat{A})} \zeta(a,b)k^b = 1
\end{equation}
for all $a \in\Ob(\cat{A})$. Similarly, a \emph{coweighting} on $\cat{A}$ is a function $k_{\bullet}: \Ob(\cat{A}) \to \mathbb{Q}$ such that 
\begin{equation}
\label{eq: coweighting}
\sum_{b \in\Ob(\cat{A})} \zeta(b,a)k_b = 1
\end{equation}
for all $a \in\Ob(\cat{A})$. Equivalently, one may view a coweighting on $\cat{A}$ as a weighting on $\cat{A}^{\text{op}}$.
\end{definition}
\begin{remark}
A finite category $\cat{A}$ need not admit a weighting or a coweighting. It is also possible for $\cat{A}$ to have more than one weighting or coweighting. Examples of both are given in \cite{Leinster}.
\end{remark}
 
In light of our earlier identification of $R(\cat{A})$ with the algebra of $n\times n$ matrices over $\mathbb{Q}$, we also reformulate the above definitions in terms of matrices. In what follows,  $\subfield$ denotes a subfield of $\mathbb{C}$.

\begin{definition}(Matricial weightings)
\label{def: weight_matrix}
Let $M$ be a matrix in $\text{Mat}_{m\times n}(\subfield)$. Denote by $1_{m} \in \text{Mat}_{m\times 1}(\subfield)\cong \subfield^m$ the column vector of ones. 
A \emph{matricial weighting} of $M$ is a column vector $w \in \subfield^n$ such that 
\begin{equation}
Mw = 1_{m}.
\end{equation}
 A \emph{matricial coweighting} of $M$ is a matricial weighting of the transpose $M^T$; equivalently, a column vector $z \in \subfield^m$ such that 
\begin{equation}
z^T M = 1_{n}^T,
\end{equation}
where ${z}^T$ denotes the  transposed vector.
\end{definition}

\begin{lemma}[See \protect{\cite[Lem~2.1]{Leinster}}]\label{lem:coincidence_sums}
Let $M$ be an element of  $\text{Mat}_{m\times n}(\subfield)$.  Suppose that both a weighting $w=(w_1,...,w_n)$ and a coweighting $z=(z_1,...,z_m)$ of $M$ exist. Then
\begin{equation*}
\sum_{i=1}^n w_i = \sum_{i=1}^m  z_i. 
\end{equation*}
\end{lemma}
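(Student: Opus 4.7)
The plan is to exploit the associativity of matrix multiplication to compute the scalar $z^T M w$ in two different ways, which immediately equates the two sums.

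First, I would rewrite the two sums as matrix products involving the column vector $1_n$. Since $\sum_{i=1}^n w_i = 1_n^T w$ and $\sum_{i=1}^n z_i = z^T 1_n$, the claim reduces to showing $1_n^T w = z^T 1_n$.

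Next, I would use the defining equations of the weighting and coweighting to substitute for $1_n$ on each side. By Definition \ref{def: weight_matrix}, $Mw = 1_n$ and $z^T M = 1_n^T$. Substituting the first identity gives $z^T 1_n = z^T(Mw)$, and substituting the second gives $1_n^T w = (z^T M) w$. By associativity of matrix multiplication, $z^T(Mw) = (z^T M)w$, so both sums equal the common scalar $z^T M w$.

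There is essentially no obstacle here: the argument is a one-line manipulation, and the only thing to verify is that the dimensions match (so that the products $z^T M w$, $z^T 1_n$, and $1_n^T w$ are all well-defined $1\times 1$ matrices, i.e.\ scalars in $\mathbb{K}$), which follows immediately from $M \in \text{Mat}_{n\times n}(\mathbb{K})$ and $w, z, 1_n \in \mathbb{K}^n$. The proof therefore consists of the single chain of equalities
\begin{equation*}
\sum_{i=1}^n w_i \;=\; 1_n^T w \;=\; (z^T M) w \;=\; z^T (M w) \;=\; z^T 1_n \;=\; \sum_{i=1}^n z_i,
\end{equation*}
which completes the argument.
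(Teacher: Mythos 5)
Your proof is correct and is essentially identical to the paper's: both compute the scalar $z^T M w$ in two ways via the defining equations $Mw = 1_n$ and $z^T M = 1_n^T$, yielding the chain $\sum_i w_i = 1_n^T w = z^T M w = z^T 1_n = \sum_i z_i$. You simply make the associativity step and the dimension check explicit, which the paper leaves implicit.
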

\begin{proof}Simply remark that
    \begin{equation}
        \sum_{i=1}^n w_i = 1_n^T w =  z^T M w = z^T 1_m =   \sum_{i=1}^m  z_i.
    \end{equation}
\end{proof}

In view of this result, the following definition makes sense.

\begin{definition}[See \protect{\cite[Def. 1.1.3]{leinster2013magnitude}}]
 We say that $M\in \text{Mat}_{m\times n}(\subfield)$ \emph{has magnitude} if it admits both a weighting $w$ and a coweighting $z$, in which case we define the \emph{magnitude} $\chi(M)$ by
\begin{equation}
\chi(M) = \sum_{i=1}^n w_i = \sum_{i=1}^m z_i. 
\end{equation}
\end{definition}

Under the identification of $R(\cat A)$ and $\text{Mat}_{n\times n}(\mathbb{Q})$ described above, there is an obvious correspondence between incidence weightings of $\cat A$ and the matricial weightings of $Z$. We refer to the matricial weightings of $Z$ (resp. matricial coweightings of $Z$) as weighting vectors (resp. coweighting vectors) of $\cat{A}$. Invoking again Lemma \ref{lem:coincidence_sums}, we introduce the following definition.

\begin{definition}\label{def:magnitude}
Let $\cat{A}$ be a finite category. We say that $\cat{A}$ \emph{has magnitude} if it admits both a weighting $k^\bullet$ and a coweighting $k_\bullet$, in which case we define the \emph{magnitude} $\chi(\cat{A})$ by
\begin{equation}
\chi(\cat{A}) = \sum_{a\in\Ob(\cat{A})} k^a = \sum_{a\in\Ob(\cat{A})} k_a. 
\end{equation}
\end{definition}

 \begin{remark}\label{rmk:topology}
     The notation is justified by the following connection with topology: if $\cat A$ is a finite skeletal category containing no endomorphisms except identities, then the magnitude of $\cat A$ coincides with the (topologically defined) Euler characteristic of its classifying space $B\cat A$, which is the geometric realization of the nerve of $\cat A$ [2, Prop. 2.11]. In particular, if $\cat A$ is the incidence poset of a simplicial complex $S$, its classifying space is naturally identified with its barycentric subdivision and $\chi(\cat A)$ equals the topological Euler characteristic of $S$. For this reason, $\chi(\cat A)$ was initially called the \emph{Euler characteristic} of the category $\cat A$, see \cite{Leinster}.
 \end{remark}

It is clear that $\cat{A}$ admits a unique weighting vector if $Z$ is an invertible matrix. Conversely, if $\cat{A}$ admits a unique weighting vector, then the nullspace of $Z$ must have dimension $0$, hence $Z$ as a square matrix must be of full rank and thus invertible. 
Similarly, $\cat{A}$ admits a unique coweighting vector if and only if $Z$ is an invertible matrix. In this case, the weighting and coweighting vectors are given by $w = Z^{-1}1_n$ and $z = Z^{-T} 1_n$ respectively. Equivalently, we have that $\cat{A}$ admits a unique weighting (resp. coweighting) if and only if $\cat{A}$ has M\"{o}bius inversion, in which case the weighting and coweighting are given by
\begin{equation}
k^a = \sum_{b\in\Ob(\cat{A})} \mu(a,b), \text{ \ \ \ } k_b = \sum_{a\in\Ob(\cat{A})} \mu(a,b)
\end{equation}
respectively, and
\begin{equation}
\label{eq: mobius_euler}
\chi(\cat{A}) = \sum_{a,b\in\Ob(\cat{A})} \mu(a,b).
\end{equation}

\section{Moore-Penrose inverse}\label{sec:pseudoinverse}

Equation \eqref{eq: mobius_euler} gives a way of calculating the magnitude when $\cat{A}$ has M\"{o}bius inversion, i.e. the zeta matrix $Z$ is invertible. We shall see below that, when $Z$ is not invertible, there is a similar formula in terms of any generalized inverse of $Z$. Moreover, if one works with the Moore-Penrose pseudoinverse of $Z$, many properties of the magnitude can be deduced from matricial identities.  
\begin{theorem}[Penrose, \protect{\cite[Thm~1]{Penrose}}]
\label{thm: penrose}
Let $A$ be an  $(m\times n)$-matrix over $\mathbb{C}$. Then, there exists a unique $(n\times m)$-matrix $A^+$ over $\mathbb{C}$ satisfying the following four properties:
\begin{itemize}
\item[(i)] $AA^+A=A$
\item[(ii)] $A^+AA^+ = A^+$
\item[(iii)] $(AA^+)^* = AA^+$
\item[(iv)] $
(A^+A)^* = A^+A$.
\end{itemize} 
\end{theorem}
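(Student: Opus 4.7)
The plan is to establish existence and uniqueness separately. Uniqueness is a direct algebraic exercise using only the four defining identities, while existence will follow from an explicit construction via the singular value decomposition.

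For uniqueness, I would assume $B$ and $C$ both satisfy (i)-(iv) with respect to $A$ and show $B=BAC=C$. Starting from $B=BAB$ by (ii) for $B$, I rewrite the inner $AB$ via (iii) as $(AB)^*$, obtaining $B=BB^*A^*$. Next I expand $A^*=(ACA)^*=A^*C^*A^*$ using (i) applied to $C$, producing $B=BB^*A^*C^*A^*=B(AB)^*(AC)^*$. Reassembling the adjoints via (iii) for $B$ and (iii) for $C$ collapses this to $B=BABAC=(BAB)AC=BAC$ after one more use of (ii). The completely analogous chain starting from $C=CAC$, now using (iv) in place of (iii) and (i) applied to $B$, gives $C=BAC$. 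Hence $B=C$.

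For existence, I would invoke the singular value decomposition $A=U\Sigma V^*$, where $U,V$ are unitary and $\Sigma$ is a rectangular diagonal matrix carrying the singular values of $A$. Let $\Sigma^+$ denote the transpose of $\Sigma$ with every nonzero diagonal entry inverted, and define $A^+:=V\Sigma^+U^*$. Substituting this candidate into each of the four Penrose identities, the unitary factors cancel using $U^*U=I$ and $V^*V=I$, so that each identity reduces to a statement about the diagonal matrices $\Sigma$ and $\Sigma^+$. The identities $\Sigma\Sigma^+\Sigma=\Sigma$ and $\Sigma^+\Sigma\Sigma^+=\Sigma^+$ are immediate from the diagonal structure, while $\Sigma\Sigma^+$ and $\Sigma^+\Sigma$ are real diagonal projections and are therefore self-adjoint; so (i)-(iv) are all verified.

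The main obstacle, if one wants a completely self-contained presentation, is justifying the singular value decomposition, which is a standard but nontrivial piece of linear algebra, typically obtained by spectrally decomposing the Hermitian positive semidefinite matrix $A^*A$. An alternative that sidesteps the SVD is to use a full-rank factorization $A=FG$ with $F$ of full column rank and $G$ of full row rank, and then to check directly that $A^+:=G^*(GG^*)^{-1}(F^*F)^{-1}F^*$ satisfies (i)-(iv); this trades the SVD for an elementary rank-factorization argument at the price of a somewhat more tedious verification of the axioms.
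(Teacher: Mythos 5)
Your proof is correct. Note, however, that the paper does not prove this theorem at all: it is stated as background and attributed to Penrose via the citation \cite[Thm~1]{Penrose}, so there is no internal proof to compare against. Your uniqueness argument (the chain $B = BAB = B(AB)^* = BB^*A^* = BB^*A^*C^*A^* = B(AB)^*(AC)^* = BABAC = BAC$, and its mirror image for $C$ using (iv)) is the standard one and is sound; each step uses exactly the identity you name. Your SVD-based existence construction is also correct: the unitary factors do cancel, and $\Sigma\Sigma^+$, $\Sigma^+\Sigma$ are real orthogonal projections, hence Hermitian. It is worth pointing out that your proposed alternative --- existence via a full-rank factorization $A = FG$ with $A^+ = G^*(GG^*)^{-1}(F^*F)^{-1}F^*$ --- is precisely the route the paper itself develops immediately after stating the theorem (the rank decomposition $A = BC$ and equation \eqref{eq:formula_pseudoinverse}, there cited to \cite[Thm~1]{rank_factor}), so that variant would integrate most naturally with the surrounding text and has the added benefit of making Lemma \ref{lem: field_pseudo} (that $A^+$ stays in the subfield $k$) transparent, which the SVD does not, since singular values generally live outside $k$.
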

Concerning the notation, ${M}^*$ denotes the conjugate transpose of a matrix $M$.
\begin{definition} Let $A$ be an  $(m\times n)$-matrix over $\mathbb{C}$.
The matrix $A^+$ above is called the \emph{Moore-Penrose pseudoinverse} of $A$. More generally, an $(n\times m)$-matrix $X$ satisfying $AXA=A$ is called a \emph{generalized inverse} of $A$. 
\end{definition}

Unlike the Moore-Penrose pseudoinverse, a generalized inverse is unique only if $A$ is invertible \cite[Ch. 1, Sec. 2]{BenIsrael2003}. 

\begin{remark}\label{rmk:integral_domains}
    K. P. S. Bhaskara Rao and his collaborators \cite{Rao1983,Bapat1990} studied the existence of generalized inverses and the (necessarily unique) Moore-Penrose pseudoinverse for matrices over integral domains. They proved that  an $(m\times n)$-matrix $A$ of determinantal rank $r$ over an integral domain $R$ has a generalized inverse if and only if there are coefficients $c_{\alpha}^\beta \in R$, indexed by $r$-element subsets $\alpha$ of $\{1,...,m\}$ and $\beta$ of   $\{1,...,n\}$, such that 
    \begin{equation}
        \sum_{\alpha,\beta} c_\alpha^\beta \det A_\alpha^\beta  = 1,
    \end{equation}
    where $A_\alpha^\beta$ is the submatrix of $A$ determined by the rows indexed by $\alpha$ and columns indexed by $\beta$. 
    They  gave an explicit formula for the generalized inverse involving the minors $\det A_\alpha^\beta$  of $A$ and the coefficients $c_\alpha^\beta$ (which are in general nonunique). Finally, they proved that the Moore-Penrose pseudoinverse of $A$ exists if and only if $\sum_{\alpha,\beta}(\det A_\alpha^\beta)^2$ is an invertible element of $R$, and in this case also gave an explicit formula for the pseudoinverse. 
\end{remark}

Henceforth, let $\subfield$ denote a subfield of $\mathbb C$.
It follows from the uniqueness of the pseudoinverse in Theorem \ref{thm: penrose} that when a matrix $A$ over $\subfield$ is invertible, $A^+ = A^{-1}$. The pseudoinverse is straightforward to calculate if $A$ has either linearly independent columns or linearly independent rows. If $A$ has linearly independent columns, then $A^*A$ is invertible. One may then check that $(A^*A)^{-1}A^*$ satisfies all four properties in Theorem \ref{thm: penrose}, and thus $A^+ = (A^*A)^{-1}A^*$. Similarly, if $A$ has linearly independent rows, then $A^*$ has linearly independent columns, and thus $AA^*$ is invertible. Then, $A^+ = A^*(AA^*)^{-1}$. 

More generally,  consider $A \in \text{Mat}_{m\times n} (\subfield)$ and let $r\leq \min(m,n)$ be the rank of $A$, which means that the dimension of the column space of $A$ over $\subfield$ is $r$. We may then pick a basis $\{\mathbf{b_1}, \ldots, \mathbf{b_r}\}$ of the column space of $A$ and construct a matrix $B \in \text{Mat}_{m\times r}(\subfield)$ by $B = [\mathbf{b_1}, \ldots, \mathbf{b_r}]$. Note that then $B$ is of rank $r$. Every column of $A$ is then a linear combination of the columns of $B$. Denote the columns by $A = [\mathbf{a_1}, \ldots, \mathbf{a_n}]$. Then, for all $1\leq j \leq n$, $\mathbf{a_j} = \sum_{i =1}^r c_{i,j}\mathbf{b_i}$, with $c_{i,j} \in \subfield$ for all $1 \leq i \leq r$. Define the matrix $C\in \text{Mat}_{r \times n}(\subfield)$ by $C_{i,j} = c_{i,j}$. We then have that $A = BC$, so $r = \text{rank}(A) = \text{rank}(BC) \leq \text{rank}(C) \leq r$, and thus $\text{rank}(C) = r$. We thus have a \emph{rank decomposition} $A = BC$. The matrix $B$ has linearly independent columns and $C$ has linearly independent rows, so $B^+ = (B^*B)^{-1}B^*$ and $C^+ = C^*(CC^*)^{-1}$. Moreover  \cite[Thm~1]{rank_factor}, 
\begin{equation}\label{eq:formula_pseudoinverse}
    A^+ =  C^+B^+ = C^*(CC^*)^{-1}(B^*B)^{-1}B^* \in \text{Mat}_{n\times m}(\subfield).
\end{equation}
Hence the rank decomposition of a matrix gives a general method for calculating the pseudoinverse.

The explicit formula \eqref{eq:formula_pseudoinverse} entails the following property of the pseudoinverse.
\begin{lemma}
\label{lem: field_pseudo}
Let $\subfield$ be a subfield of $\mathbb{C}$. If $A$ is a matrix over $\subfield$, then $A^+$ is also a matrix over $\subfield$.
\end{lemma}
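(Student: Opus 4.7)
The strategy is to apply the explicit rank-decomposition formula \eqref{eq:formula_pseudoinverse} derived just above, after verifying that every ingredient can be realized within $k$. Given $A\in\text{Mat}_{m\times n}(k)$, I would first produce a rank decomposition $A = BC$ with $B\in\text{Mat}_{m\times r}(k)$ and $C\in\text{Mat}_{r\times n}(k)$, where $r$ is the rank of $A$. The key input is the field-independence of rank: whether computed via row reduction or as the size of the largest nonvanishing minor, rank is determined by polynomial conditions on the entries and is therefore the same over $k$ as over $\mathbb{C}$. Concretely, I pick $r$ columns of $A$ forming a $k$-basis of its column space and assemble them as the columns of $B$; the remaining columns of $A$ are then unique $k$-linear combinations of these, and their coefficients record the entries of $C$.

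Next I substitute into \eqref{eq:formula_pseudoinverse} to obtain $A^+ = C^*(CC^*)^{-1}(B^*B)^{-1}B^*$. The $r\times r$ matrices $B^*B$ and $CC^*$ are invertible, and the inverse of an invertible matrix with entries in a field lies over the same field (by Cramer's rule). Provided forming the conjugate transpose preserves $k$---which is automatic whenever $k$ is stable under complex conjugation, in particular for $k=\mathbb{Q}$, the case relevant to the magnitude of ordinary finite categories---every factor in the displayed formula lies in $\text{Mat}(k)$, hence so does $A^+$.

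The main obstacle is really only bookkeeping: at each step (decomposing $A$, inverting the auxiliary $r\times r$ matrices, forming the conjugate transposes) one has to verify that we stay inside $k$. None of these is genuinely difficult once the field-independence of rank and the closure of $k$ under conjugation are granted, and formula \eqref{eq:formula_pseudoinverse} then delivers the conclusion immediately.
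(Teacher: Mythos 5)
Your route is the same as the paper's: the paper offers no separate argument for this lemma, stating only that the explicit formula \eqref{eq:formula_pseudoinverse} ``entails'' it, and you have supplied exactly the details that make this work (field-independence of rank, a rank factorization $A=BC$ realized over $k$, and Cramer's rule for the $r\times r$ inverses $(CC^*)^{-1}$ and $(B^*B)^{-1}$). The one place you go beyond the paper---your proviso that $k$ be stable under complex conjugation---is not mere bookkeeping but a genuine correction: without it the lemma as stated is false. Take $k=\mathbb{Q}(\alpha)$ with $\alpha=\sqrt[3]{2}\,e^{2\pi i/3}$ (a nonreal cubic field, not closed under conjugation) and $A=(\alpha\ \ 1)\in\text{Mat}_{1\times 2}(k)$; then $A^+=A^*(AA^*)^{-1}=(2^{2/3}+1)^{-1}(\bar\alpha,\ 1)^T$, and already the real entry $(2^{2/3}+1)^{-1}$ lies outside $k$, because $2^{2/3}=\alpha\bar\alpha$ generates the real cubic field $\mathbb{Q}(2^{1/3})$, which cannot sit inside the nonreal cubic field $k$. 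So the lemma should be stated for subfields of $\mathbb{C}$ closed under conjugation; this covers $k=\mathbb{Q}$, the only case used for the zeta matrix of a finite category, so nothing downstream is affected, and with that hypothesis made explicit your proof is complete.
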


An alternate formula for calculating the Moore-Penrose pseudoinverse of a matrix uses its singular value decomposition (over $\mathbb C$). Recall that any matrix $M\in \text{Mat}_{m\times n}(\mathbb{C})$ admits a singular value decomposition given by $M = U\Sigma V$ where $U \in \text{Mat}_{m\times m}(\mathbb{C})$ and $V\in \text{Mat}_{n\times n}(\mathbb{C})$ are unitary matrices and $\Sigma \in \text{Mat}_{m\times n}(\mathbb{C})$ is a diagonal matrix with entries that are positive (hence real) or zero. Denote by $\Sigma' \in \text{Mat}_{n\times m}(\mathbb{C})$ the matrix obtained from $\Sigma$ by inverting the non-zero elements of $\Sigma$ and then taking the transpose. Then $\Sigma\Sigma', \Sigma'\Sigma$ are both diagonal square matrices whose nonzero entries are all 1. The reader may verify that $\Sigma'$ then satisfies the four properties of Theorem \ref{thm: penrose}|hence $\Sigma^+ = \Sigma'$| and that
\begin{equation}
\label{eq:SVD_pseudoinverse}
    M^+ = V^*\Sigma^+U^*.
\end{equation}
This appears as Lemma 1.6 in \cite{Penrose}.

We now consider the behavior of the pseudoinverse with respect to the Kronecker product and direct sum operations; this will entail properties of the magnitude under products and coproducts of categories in the next section. Take two matrices $A \in \text{Mat}_{m\times n}(\subfield), B\in \text{Mat}_{p\times q}(\subfield)$. Their Kronecker product $A \otimes B \in \text{Mat}_{pm \times qn}(\subfield)$ is given by the block matrix
\begin{equation}
A \otimes B
= \begin{pmatrix}
a_{1,1} B & \ldots&  a_{1,n} B\\
\vdots & \ddots & \vdots \\
a_{m,1}B & \ldots & a_{m,n}B 
\end{pmatrix}.
\end{equation}
Likewise, their direct sum $A \oplus B \in \text{Mat}_{(m+p)\times (n+q)}(\subfield)$ is given by the block matrix
\begin{equation}
A \oplus B = \begin{pmatrix}
A & 0\\
0 & B
\end{pmatrix}.
\end{equation}

For use in proving the following proposition, we state some well-known properties of the Kronecker product and direct sum of two matrices, all of which are straightforward to verify.
\begin{lemma}
\label{lem:properties_kroneckerprod_directsum}
 Denote by $I_n$ the $n\times n$ identity matrix. The identities $I_n\otimes I_m = I_{nm}$ and $I_n\oplus I_m = I_{n+m}$ hold. Moreover, for any matrices $A \in \text{Mat}_{m\times n}(\subfield)$, $ B\in \text{Mat}_{p\times q}(\subfield)$, $C\in \text{Mat}_{n\times l}(\subfield)$, and $ D\in \text{Mat}_{q\times r}(\subfield)$, 
\begin{enumerate}[(i)]
    \item $(A\otimes B)^* = A^* \otimes B^*$ and $(A\oplus B)^* = A^* \oplus B^*$; 
   
        \item  $(A\otimes B)(C\otimes D) = AC\otimes BD$; and 
        \item  $(A\oplus B)(C\oplus D) = AC\oplus BD$.
\end{enumerate}    
\end{lemma}

\begin{proposition}
\label{prop: kronecker_direct_sum}
 For any $A \in \text{Mat}_{m\times n}(\mathbb{\subfield})$ and $ B\in \text{Mat}_{p\times q}(\mathbb{\subfield})$,
\begin{itemize}
\item[(i)] $(A \otimes B)^+ = A^+ \otimes B^+$, 
\item[(ii)] $(A \oplus B)^+ = A^+ \oplus B^+$.
\end{itemize}
\end{proposition}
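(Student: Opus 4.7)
The plan is to verify both identities by invoking the uniqueness part of Theorem \ref{thm: penrose}: it suffices to check that the proposed candidate $A^+\otimes B^+$ (resp.\ $A^+\oplus B^+$) satisfies the four Penrose conditions relative to $A\otimes B$ (resp.\ $A\oplus B$).

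For part (i), I would first record the two algebraic properties of the Kronecker product that drive the whole argument: the mixed-product rule $(A\otimes B)(C\otimes D)=(AC)\otimes(BD)$, valid whenever the block sizes are compatible, and the interaction with the conjugate transpose, $(A\otimes B)^*=A^*\otimes B^*$. Granted these, each of the four Penrose conditions reduces to the corresponding condition on $A$ and on $B$ separately. For instance,
\begin{equation}
(A\otimes B)(A^+\otimes B^+)(A\otimes B)=(AA^+A)\otimes(BB^+B)=A\otimes B,
\end{equation}
and
\begin{equation}
\bigl((A\otimes B)(A^+\otimes B^+)\bigr)^*=(AA^+\otimes BB^+)^*=(AA^+)^*\otimes(BB^+)^*=AA^+\otimes BB^+,
\end{equation}
which equals $(A\otimes B)(A^+\otimes B^+)$. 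The remaining two conditions are handled identically. By uniqueness, $(A\otimes B)^+=A^+\otimes B^+$.

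For part (ii), I would follow exactly the same strategy using the analogous properties of the direct sum: $(A\oplus B)(C\oplus D)=(AC)\oplus(BD)$ and $(A\oplus B)^*=A^*\oplus B^*$, which are immediate from the block-diagonal form. Substituting $A^+\oplus B^+$ into each of the four Penrose equations gives block-diagonal matrices whose diagonal blocks are precisely the Penrose equations for $A$ and $B$, so each identity reduces to the corresponding property for $A^+$ and $B^+$. Uniqueness then yields $(A\oplus B)^+=A^+\oplus B^+$.

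There is no real obstacle here: everything is a matter of verifying the mixed-product and transpose compatibilities, then applying the Penrose properties of $A^+$ and $B^+$ blockwise. The only mildly delicate point is keeping track of matrix shapes so that the Kronecker identities in part (i) apply to the correct pairs, but with $A\in\text{Mat}_{m\times n}$, $B\in\text{Mat}_{p\times q}$, $A^+\in\text{Mat}_{n\times m}$, $B^+\in\text{Mat}_{q\times p}$, all products encountered are well-defined.
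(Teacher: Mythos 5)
Your proof is correct and follows essentially the same strategy as the paper: verify the four Penrose conditions for the candidate and invoke uniqueness, using the mixed-product and conjugate-transpose compatibilities of $\otimes$ and $\oplus$. The only difference is that the paper cites a reference for part (i) and writes out only part (ii), whereas you carry out the (identical) verification for both parts explicitly.
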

\begin{proof}
Let $A = U_1\Sigma_1V_1$ and $B = U_2\Sigma_2V_2$ be singular value decompositions for $A$ and $B$ over $\mathbb C$. By  Lemma \ref{lem:properties_kroneckerprod_directsum}, we then have that 
\begin{equation}
    A\otimes B = (U_1\otimes U_2)(\Sigma_1\otimes \Sigma_2)(V_1\otimes V_2)
\end{equation}
and similarly
\begin{equation}
    A\oplus B = (U_1\oplus U_2)(\Sigma_1 \oplus \Sigma_2)(V_1\oplus V_2)
\end{equation}
with $U_1\otimes U_2, V_1\otimes V_2, U_1\oplus U_2, V_1\oplus V_2$ all unitary and $\Sigma_1\otimes \Sigma_2, \Sigma_1 \oplus \Sigma_2$ both diagonal. Observe also that 
$(\Sigma_1\otimes \Sigma_2)^+ = \Sigma_1^+ \otimes \Sigma_2^+$ and $(\Sigma_1\oplus \Sigma_2)^+= \Sigma_1^+\oplus \Sigma_2^+$. 
Equation (\ref{eq:SVD_pseudoinverse}) then gives
\begin{equation*}
    (A\otimes B)^+ = (U_1\otimes U_2)^*(\Sigma_1\otimes \Sigma_2)^+(V_1\otimes V_2)^*
    = (U_1^*\Sigma_1^+V_1^*) \otimes (U_2^*\Sigma_2^+V_2^*) 
    = A^+ \otimes B^+
\end{equation*}
and similarly for $(A\oplus B)^+$. 
\end{proof}

By induction, we may generalize Proposition \ref{prop: kronecker_direct_sum} to finitely many matrices. 
\begin{corollary}
Let $n \in \mathbb{N}$, $\{A_i\}_{i=1}^n$ be a finite collection of matrices over $\mathbb{C}$. Then,
\begin{itemize}
\item[(i)] $(A_1 \otimes \ldots \otimes A_n)^+
 = A_1^+ \otimes \ldots \otimes A_n^+$,
 \item[(ii)] $(A_1 \oplus \ldots \oplus A_n)^+
 = A_1^+ \oplus \ldots \oplus A_n^+$. 
\end{itemize}
\end{corollary}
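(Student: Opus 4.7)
The plan is a direct induction on $n$, using Proposition \ref{prop: kronecker_direct_sum} as the binary case. For $n = 1$ both identities hold trivially since both sides equal $A_1^+$. For $n = 2$ the statement is exactly Proposition \ref{prop: kronecker_direct_sum}. For the inductive step, assuming both identities hold for any collection of $n-1$ matrices over $\mathbb{C}$, I would use the associativity of the Kronecker product and of the direct sum (up to the canonical identification of the index sets indexing rows and columns) to write
\begin{equation*}
A_1 \otimes \cdots \otimes A_n = (A_1 \otimes \cdots \otimes A_{n-1}) \otimes A_n,
\end{equation*}
and analogously
\begin{equation*}
A_1 \oplus \cdots \oplus A_n = (A_1 \oplus \cdots \oplus A_{n-1}) \oplus A_n.
\end{equation*}

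Applying Proposition \ref{prop: kronecker_direct_sum} to the two factors on the right-hand side of each expression reduces the pseudoinverse of the $n$-fold product (resp.\ sum) to the pseudoinverse of the $(n-1)$-fold product (resp.\ sum) combined with $A_n^+$ via $\otimes$ (resp.\ $\oplus$). The inductive hypothesis then distributes the pseudoinverse over the remaining $n-1$ factors, yielding $A_1^+ \otimes \cdots \otimes A_n^+$ and $A_1^+ \oplus \cdots \oplus A_n^+$ respectively.

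There is essentially no obstacle: the entire content of the corollary lies in the binary case (already proved) and the bookkeeping that associativity of $\otimes$ and $\oplus$ makes the $n$-fold expressions unambiguous. The only thing worth being explicit about is that associativity of the Kronecker product is a classical fact, and associativity of $\oplus$ is immediate from the block-diagonal description given just before Proposition \ref{prop: kronecker_direct_sum}.
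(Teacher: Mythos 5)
Your proposal is correct and matches the paper's approach exactly: the paper states the corollary "by induction" from Proposition \ref{prop: kronecker_direct_sum}, which is precisely the argument you spell out. Your version merely makes explicit the associativity bookkeeping that the paper leaves implicit.
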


\section{Weightings and magnitude via pseudoinversion}\label{sec:magnitude_via_pseudoinversion}

We first remark that any generalized inverse of a matrix $M$ gives an explicit way of finding matricial weightings and coweightings, and thus a formula for the magnitude of $M$.

\begin{theorem}\label{thm:formula_magnitude_matrix}
Let $M$  be an element of $\text{Mat}_{m\times n}(\subfield)$ and let $M'\in \text{Mat}_{n\times m}(\subfield)$ be a generalized inverse of $M$.  The matrix $M$ has a weighting (resp. coweighting) if and only if $M'1_m$ is a matricial weighting (resp. $(M')^T1_n$ is a matricial coweighting) of $M$. In particular, if $M$ has magnitude, then 
\begin{equation}
\label{eq: Euler_char_pseudoinverse_matrix}
\chi(M) = \sum_{i=1}^n \sum_{j=1}^m M'_{i,j}
\end{equation}
\end{theorem}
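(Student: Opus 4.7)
The plan is to reduce everything to property (i) of the Moore-Penrose pseudoinverse, namely $MM^+M = M$. The key observation is that this identity says $MM^+$ acts as the identity on the column space of $M$; so the equation $Mw = 1_n$ is solvable iff $MM^+1_n = 1_n$, and in that case $M^+1_n$ is a distinguished solution.

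More concretely, for the weighting direction, I would argue as follows. One implication is trivial: if $M^+1_n$ is a weighting, then by definition $M$ has a weighting. For the converse, suppose $w$ is any weighting, i.e.\ $Mw = 1_n$. Then
\begin{equation*}
M(M^+1_n) \;=\; MM^+(Mw) \;=\; (MM^+M)w \;=\; Mw \;=\; 1_n,
\end{equation*}
using property (i) of Theorem~\ref{thm: penrose} in the third equality. Hence $M^+1_n$ is itself a weighting of $M$.

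The coweighting case is entirely analogous. If $z^T M = 1_n^T$ is any coweighting, then
\begin{equation*}
(1_n^T M^+) M \;=\; (z^T M) M^+ M \;=\; z^T (M M^+ M) \;=\; z^T M \;=\; 1_n^T,
\end{equation*}
again by property (i), so $1_n^T M^+$ is a coweighting of $M$. Conversely, the existence of $1_n^T M^+$ as a coweighting of course implies that $M$ has one.

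Finally, assume $M$ has magnitude; then by Lemma~\ref{lem:coincidence_sums} the magnitude can be computed from any weighting. Taking the weighting $w = M^+1_n$ produced above,
\begin{equation*}
\chi(M) \;=\; \sum_{i=1}^n w_i \;=\; 1_n^T (M^+ 1_n) \;=\; \sum_{i=1}^n\sum_{j=1}^n M^+_{i,j},
\end{equation*}
which is \eqref{eq: Euler_char_pseudoinverse_matrix}. There is no real obstacle here; the only subtlety is verifying that the pseudoinverse actually lands us in $\mathbb{K}$, which is guaranteed by Lemma~\ref{lem: field_pseudo}, so the formula genuinely produces a weighting (and coweighting) over the original field.
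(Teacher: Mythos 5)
Your proof is correct and follows essentially the same route as the paper's: both arguments hinge on property (i) of Theorem~\ref{thm: penrose} to show that $MM^+1_n = MM^+Mw = Mw = 1_n$ whenever a weighting $w$ exists, with the analogous transposed computation for coweightings, and then read off the magnitude as $1_n^T M^+ 1_n$. Your added remark that Lemma~\ref{lem: field_pseudo} keeps the weighting in $\mathbb{K}$ is a nice touch but not a substantive difference.
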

\begin{proof}
To prove the first claim, suppose that 
$M$ has a matricial weighting. Take notation as in Definition \ref{def: weight_matrix} and let $w$ be the corresponding weighting vector. Then, $Mw = 1_m$. By associativity of matrix multiplication and the equality $MM'M=M$, we then have that
\begin{equation*}
M(M'1_m) = (MM')1_m = MM'Mw = Mw = 1_m.
\end{equation*}
So $M'1_m$ is matricial weighting on $M$. 

Similarly, assume that $M$ has a coweighting $z$. Then, $z^TM = 1_n^T$. In this case, 
\begin{equation*}
(1_n^TM')M = z^TMM'M = z^TM=1_n^T.
\end{equation*}
Therefore $(1^T_nM')^T = (M')^T1_n$ is a matricial coweighting of $M$. 

If $M$ has magnitude, then by definition, it has both a weighting and a coweighting; in particular $w=M'1_m$ is a weighting. We then see that 
\begin{equation}
\chi(M) = \sum_{i=1}^n w_i 
= \sum_{i=1}^n \sum_{j=1}^m M'_{i,j}.
\end{equation}
\end{proof}
\begin{remark}
 A first version of this theorem assumed that $M$ was square and $M'$ was the Moore-Penrose pseudoinverse, but the anonymous reviewer pointed out that these assumptions could be weakened. Remark that our proof also works for matrices over integral domains, so Theorem \ref{thm:formula_magnitude_matrix} might be applied even when the Moore-Penrose pseudoinverse does not exist; see Remark \ref{rmk:integral_domains}.
 \end{remark}

\begin{remark}
  Although Theorem \ref{thm:formula_magnitude_matrix} and the later results in this paper provide general tools for checking if a finite category has magnitude and subsequently calculating it, when using numerical computations for concrete examples, solving the system given by \eqref{eq: weighting} and \eqref{eq: coweighting} might be more efficient.
\end{remark}

Now we apply this to the categorical setting. Recall that we may identify $R(\cat{A})$ with the algebra of $(n \times n)$-matrices over $\mathbb{Q}$, where $n = |\Ob(\cat{A})|$. Let $f \in R(\cat{A})$ with corresponding matrix $F \in \text{Mat}_{n\times n} (\mathbb{Q})$. By Lemma \ref{lem: field_pseudo}, we then have that $F^+ \in \text{Mat}_{n\times n} (\mathbb{Q})$, and thus $F^+$ determines an element $f^+ \in R(\cat{A})$. We consider this process for $\zeta \in R(\cat{A})$.

\begin{definition}\label{def:pseudomoebius}
Let $\cat{A} \in \cat{FinCat}$ and let $Z$ be its zeta matrix (under some ordering of the objects). We then define the \emph{pseudo-M\"{o}bius function} $\tilde{\mu} \in R(\cat{A})$ as the element corresponding to the pseudoinverse $Z^+$, i.e. $\tilde{\mu}(x_i,x_j) = Z^+_{i,j}$ for all $x_i,x_j \in\Ob(\cat{A})$. 
\end{definition}

\begin{remark}
The definition of $\tilde{\mu}$ does not depend on the ordering used to define $Z$. To see this, let $\sigma \in S_n$ be a permutation and define another zeta matrix $Z'$ by $Z'_{i,j} = \zeta(x_{\sigma(i)}, x_{\sigma(j)}) = Z_{\sigma(i),\sigma(j)}$. 
Then, $Z' = P_{\sigma}^{-1}ZP_{\sigma}$ where $P_{\sigma} \in \text{Mat}_{n\times n}(\mathbb{Q})$ is the matrix obtained from the identity matrix by permuting the columns by $\sigma$. By \cite[Lem~1.6]{Penrose}, it follows that $(Z')^+ = P_{\sigma}^{-1}Z^+P_{\sigma}$. Then $(Z')^+_{i,j} = Z^+_{\sigma(i), \sigma(j)}$, so the pseudo-M\"{o}bius functions defined by $(Z')^+$ and $ Z^+$ agree. 
\end{remark}

\begin{remark}[Magnitude of metric spaces]
    As above, let $\subfield$ be a subfield of $\mathbb C$. Given a finite category $\cat A$ enriched over a monoidal category $(\cat V,\otimes,1)$ and a monoid morphism $\norm{\cdot}:(\cat V/\cong,\otimes,1) \to (\mathbb K,\cdot,1)$, defined over isomorphism classes of $\cat V$, reference \cite{leinster2013magnitude} defines the  zeta function $\zeta:\Ob(\cat A)\times \Ob(\cat A)\to \subfield$  via $\zeta(a,b)=\norm{\Hom(a,b)}$ . This can be represented by a matrix $Z\in \text{Mat}_{n\times n} (\subfield) $, which has a unique Moore-Penrose pseudoinverse. Therefore our definition of pseudo-M\"obius function, and all the results below, extend to this enriched setting. 
    
    In particular, when $\cat V$ is the category whose objects are $[0,\infty]$ and such that $x\to y$ iff $x\geq y$, a $\cat V$-enriched category is a generalized metric space (in the sense of Lawvere); the distance between two points is, by definition,  $d(a,b):=\Hom(a,b)\in [0,\infty]$. Every nonzero valuation is of the form $\norm{\Hom(a,b)}=\exp(-t d(a,b))$ for some $t\in \mathbb R$. The magnitude of any matrix $Z$ representing $\zeta$ is then an isometric invariant of metric spaces. For details, see \cite{leinster2013magnitude}.
\end{remark}

\begin{remark}
    In the case of posets, the zeta function is always invertible and $\zeta(x,y)=0$ implies $\mu(x,y)=0$ \cite[Thm. 4.1]{Leinster}, hence $\mu$ is also an element of the fine incidence algebra (see Remark \ref{rem:incidence_algs}). For general categories, one might have $\tilde \mu(x,y) \neq 0$ even if there are no arrows from $x$ to $y$. 
\end{remark}

There is an obvious corollary of Theorem \ref{thm:formula_magnitude_matrix}.

\begin{corollary}\label{cor:computation_Euler_char}
Let $\cat{A} \in \cat{FinCat}$. Then, $\cat{A}$ has a weighting (resp. coweighting) if and only if $k^x = \sum_{y\in\Ob(\cat{A})} \tilde{\mu}(x,y)$ is a weighting (resp. $k_x = \sum_{y\in\Ob(\cat{A})} \tilde{\mu}(y,x)$ is a coweighting) on $\cat{A}$. In particular, if $\cat{A}$ has magnitude, then 
\begin{equation}
\label{eq: Euler_char_pseudo}
\chi(\cat{A}) = \sum_{x,y\in\Ob(\cat{A})}\tilde{\mu}(x,y).
\end{equation}
\end{corollary}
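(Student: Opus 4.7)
The plan is to obtain this corollary as a direct translation of Theorem \ref{thm:formula_magnitude_matrix} through the identification of $R(\cat A)$ with $\text{Mat}_{n\times n}(\mathbb Q)$ established in Section \ref{sec:magnitude}. Fix an ordering $\Ob(\cat A)=\{x_1,\dots,x_n\}$, so that $\zeta$ is represented by the matrix $Z$, and incidence weightings (resp. coweightings) of $\cat A$ correspond bijectively to matricial weightings (resp. coweightings) of $Z$ under the identification $k^{x_i} \leftrightarrow w_i$ (resp. $k_{x_i}\leftrightarrow z_i$).

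The key steps are then as follows. First, apply Theorem \ref{thm:formula_magnitude_matrix} to $M=Z$: the category $\cat A$ admits a weighting if and only if $Z$ admits a matricial weighting, which happens if and only if the column vector $w := Z^{+}1_n$ satisfies $Z w = 1_n$, and similarly on the coweighting side with $z^T := 1_n^T Z^{+}$. Next, unpack the entries: by definition of $\tilde \mu$, one has $(Z^{+}1_n)_i = \sum_{j=1}^n Z^{+}_{i,j} = \sum_{y\in\Ob(\cat A)} \tilde\mu(x_i,y)$, which is precisely the claimed formula for $k^{x_i}$; the coweighting case is symmetric, giving $k_{x_j}=\sum_{y} \tilde\mu(y,x_j)$. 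Finally, when $\cat A$ has magnitude, equation \eqref{eq: Euler_char_pseudoinverse_matrix} applied to $Z$ reads $\chi(\cat A) = \sum_{i,j} Z^{+}_{i,j} = \sum_{x,y\in\Ob(\cat A)} \tilde\mu(x,y)$, which is \eqref{eq: Euler_char_pseudo}.

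There is essentially no serious obstacle here: the only point that needs care is the rationality of the proposed (co)weighting, since by Definition \ref{def:weightings} an incidence weighting must take values in $\mathbb Q$. This is guaranteed by Lemma \ref{lem: field_pseudo}, which ensures $Z^{+}\in \text{Mat}_{n\times n}(\mathbb Q)$ because $Z\in \text{Mat}_{n\times n}(\mathbb Q)$, so that $\tilde\mu$ is a well-defined element of $R(\cat A)$ and the sums $\sum_y \tilde\mu(x,y)$, $\sum_y \tilde\mu(y,x)$ are rational. With that noted, the proof is a one-line invocation of Theorem \ref{thm:formula_magnitude_matrix} together with the dictionary between $R(\cat A)$ and matrices.
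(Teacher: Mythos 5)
Your proposal is correct and follows exactly the route the paper intends: the paper states this result as an immediate consequence of Theorem \ref{thm:formula_magnitude_matrix} applied to $M=Z$ under the identification of $R(\cat A)$ with $\text{Mat}_{n\times n}(\mathbb Q)$, with Lemma \ref{lem: field_pseudo} guaranteeing that $\tilde\mu$ is rational-valued. Your explicit unpacking of the entries of $Z^+1_n$ and $1_n^TZ^+$ as $\sum_y\tilde\mu(x,y)$ and $\sum_y\tilde\mu(y,x)$ is precisely the translation the paper leaves implicit.
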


So the Moore-Penrose pseudoinverse  gives us a way of checking if a finite category $\cat{A}$ has a weighting, coweighting, and thereby magnitude, along with a formula for calculating $\chi(\cat{A})$ in the case that $\cat{A}$ has magnitude. According to Theorem \ref{thm:formula_magnitude_matrix}, this is also true for any generalized inverse of $Z$.   However, the uniqueness of the  Moore-Penrose pseudoinverse is essential for Definition \ref{def:pseudomoebius}, and the properties of the pseudoinverse discussed in Section \ref{sec:pseudoinverse} translate into formulas for the pseudo-M\"obius function of products and coproducts of categories in terms of the pseudo-M\"obius functions of their factors.

Moreover, because the Moore-Penrose pseudoinverse of a matrix over $\mathbb C$ always exists, we can extend the definition of magnitude to \emph{any} rectangular matrix and \emph{any} finite category.

\begin{definition}
    The  \emph{generalized magnitude} of an $(m\times n)$-matrix $M$ over a subfield $\subfield$ of $\mathbb C$ is 
    \begin{equation}
        \chi(M) =\sum_{i=i}^n \sum_{j=1}^m M^+_{i,j}.
    \end{equation}
    The \emph{generalized magnitude} of a finite category $\cat A$ (possibly enriched) is the magnitude of any matrix representing its zeta function $\zeta$. 
\end{definition}

Because of Theorem 4.1, the generalized magnitude coincides with Leinster's magnitude (Definition \ref{def:magnitude}) whenever the latter makes sense i.e. when the matrix or category has a weighting and a coweighting. Hence we denote it with the same letter, $\chi$. 

Remark also that if a matrix $M$ has coefficients in a subfield $\subfield$ of $\mathbb C$, then $\chi(M)$ belongs to $\subfield$ too in virtue of Lemma \ref{lem: field_pseudo}.

We now consider the behavior of the pseudo-M\"{o}bius function and the generalized magnitude under taking the product and coproduct of categories. 
Let $\cat{A}, \cat{B} \in \cat{FinCat}$, and let $\zeta_{\cat{A}}, \zeta_{\cat{B}}$ denote their respective zeta functions. Similarly, let $Z_{\cat{A}}, Z_{\cat{B}}$ denote the corresponding zeta matrices (under some ordering of the objects). Let $\cat{A}\times\cat{B}$ and $\cat{A} + \cat{B}$ denote the product \cite[II.3]{MacLane1998} and coproduct (disjoint union) respectively. 
Observe that for all $x,y\in\Ob(\cat{A}), a,b \in\Ob(\cat{B})$,
\begin{equation}
\zeta_{\cat{A}\times \cat{B}}(\langle x,a\rangle, \langle y,b \rangle)
= \zeta_{\cat{A}}(x,y)\zeta_{\cat{B}} (a,b).
\end{equation}
Similarly, for any $x,y \in\Ob(\cat{A} + \cat{B})$,
\begin{equation}
\zeta_{\cat{A} + \cat{B}} (x,y) = 
\begin{cases}
\zeta_{\cat{A}}(x,y) & x,y \in\Ob(\cat{A})\\
\zeta_{\cat{B}} (x,y) & x,y \in\Ob(\cat{B})\\
0 & \text{otherwise}
\end{cases}.
\end{equation}
Let $Z_{\cat{A} \times \cat{B}}, Z_{\cat{A} + \cat{B}}$ denote the zeta matrices of the product and coproduct respectively. We then see that $Z_{\cat{A}\times \cat{B}} = Z_{\cat{A}} \otimes Z_{\cat{B}}$ and $Z_{\cat{A} + \cat{B}} = Z_{\cat{A}} \oplus Z_{\cat{B}}$. Then by Proposition \ref{prop: kronecker_direct_sum}, $Z_{\cat{A}\times \cat{B}}^+ = Z_{\cat{A}}^+ \otimes Z_{\cat{B}}^+$ and $Z_{\cat{A} + \cat{B}}^+ = Z_{\cat{A}}^+ \oplus Z_{\cat{B}}^+$. We obtain in this way the following generalization of \cite[Lem. 1.13]{Leinster}.
\begin{lemma}
Let $\cat{A}, \cat{B} $ be finite categories, $\tilde{\mu}_{\cat{A}}, \tilde{\mu}_{\cat{B}}$  their respective pseudo-M\"{o}bius functions, and   $\tilde{\mu}_{\cat{A}\times\cat{B}}$ (resp. $ \tilde{\mu}_{\cat{A}+\cat{B}}$) the pseudo-M\"{o}bius function of $\cat{A} \times \cat{B}$ (resp. $\cat{A} + \cat{B}$). 
\begin{itemize}
\item[(i)] For all $x_1,y_1\in\Ob(\cat{A})$ and $ x_2,y_2 \in\Ob(\cat{B})$,   $$\tilde{\mu}_{\cat{A}\times\cat{B}}(\langle x_1,y_1\rangle, \langle x_2,y_2 \rangle) = \tilde{\mu}_{\cat{A}}(x_1,y_1)\tilde{\mu}_{\cat{B}}(x_2,y_2).$$
\item[(ii)] For all $x,y \in\Ob(\cat{A} + \cat{B})$, $$\tilde{\mu}_{\cat{A} + \cat{B}}(x,y) = 
\begin{cases}
\tilde{\mu}_{\cat{A}} (x,y) & x,y \in\Ob(\cat{A})\\
\tilde{\mu}_{\cat{B}} (x,y) & x,y \in\Ob(\cat{B})\\
0 & \text{otherwise}
\end{cases}.$$
\end{itemize}
\end{lemma}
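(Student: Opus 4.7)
The plan is to translate directly from the matrix identities already established in the paragraph preceding the lemma back into the functional language of the pseudo-Möbius. The key identities that do all the work are $Z_{\cat{A}\times \cat{B}} = Z_{\cat{A}} \otimes Z_{\cat{B}}$ and $Z_{\cat{A}+\cat{B}} = Z_{\cat{A}} \oplus Z_{\cat{B}}$, together with the compatibility of the Moore--Penrose pseudoinverse with $\otimes$ and $\oplus$ established in Proposition \ref{prop: kronecker_direct_sum}. Thus the proof will really consist of reading off each entry of the right-hand side under the definitions of the Kronecker product and direct sum of matrices.

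For part (i), I would fix enumerations $\Ob(\cat A)=\{x_1,\ldots,x_m\}$ and $\Ob(\cat B)=\{y_1,\ldots,y_n\}$, order $\Ob(\cat A\times\cat B)$ lexicographically as the pairs $\langle x_i,y_j\rangle$, and observe that under this enumeration the zeta matrix of $\cat A\times\cat B$ factors as $Z_{\cat A}\otimes Z_{\cat B}$ because $|\Hom((x_i,y_j),(x_k,y_l))|=|\Hom(x_i,x_k)|\cdot|\Hom(y_j,y_l)|$. Proposition \ref{prop: kronecker_direct_sum}(i) gives $Z_{\cat A\times\cat B}^+=Z_{\cat A}^+\otimes Z_{\cat B}^+$, and the definition of the Kronecker product then reads entrywise as the required multiplicative formula for $\tilde\mu_{\cat A\times\cat B}$.

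For part (ii), I would fix the enumeration of $\Ob(\cat A+\cat B)$ that lists the objects of $\cat A$ first and then the objects of $\cat B$. Since there are no morphisms between objects of the two summands, the zeta matrix has the block-diagonal form $Z_{\cat A}\oplus Z_{\cat B}$. Applying Proposition \ref{prop: kronecker_direct_sum}(ii) yields $Z_{\cat A+\cat B}^+=Z_{\cat A}^+\oplus Z_{\cat B}^+$, and the definition of the direct sum, read entrywise, gives exactly the piecewise formula: diagonal blocks reproduce $\tilde\mu_{\cat A}$ and $\tilde\mu_{\cat B}$, while off-diagonal blocks vanish.

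I do not expect any substantive obstacle: Proposition \ref{prop: kronecker_direct_sum} has already done the heavy lifting, and all that remains is careful bookkeeping of indices so that the matrix identity $Z_{\cat A\times\cat B}^+=Z_{\cat A}^+\otimes Z_{\cat B}^+$ translates correctly to the value of $\tilde\mu$ at a specified pair of object-pairs, and similarly for the coproduct. The only mild trap is to fix and use consistent orderings on $\Ob(\cat A\times\cat B)$ and $\Ob(\cat A+\cat B)$, since the conclusion is a statement about functions on objects and is independent of any chosen enumeration.
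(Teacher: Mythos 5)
Your proposal is correct and follows exactly the route the paper takes: the paper's justification for this lemma is the paragraph immediately preceding it, which records $Z_{\cat{A}\times \cat{B}} = Z_{\cat{A}} \otimes Z_{\cat{B}}$ and $Z_{\cat{A}+\cat{B}} = Z_{\cat{A}} \oplus Z_{\cat{B}}$ and then invokes Proposition \ref{prop: kronecker_direct_sum} to get $Z_{\cat{A}\times\cat{B}}^+ = Z_{\cat{A}}^+\otimes Z_{\cat{B}}^+$ and $Z_{\cat{A}+\cat{B}}^+ = Z_{\cat{A}}^+\oplus Z_{\cat{B}}^+$, after which the lemma is read off entrywise. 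Your added care about fixing consistent enumerations of the object sets is a sensible (and harmless) bit of extra bookkeeping that the paper leaves implicit.
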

By induction, we may again generalize this to a finite family of finite categories:
\begin{corollary}\label{cor:properties-pseudomoebius}
Let  $\{\cat{A}_i\}_{i=1}^n$ be collection of finite categories. 
\begin{itemize}
\item[(i)] For all $(x_i)_{i=1}^n, (y_i)_{i=1}^n \in \Ob(\prod_{i=1}^n \cat{A}_i)$, $$\tilde{\mu}_{\prod_{i=1}^n \cat{A}_i}((x_i)_{i=1}^n, (y_i)_{i=1}^n) = \prod_{i=1}^n \tilde{\mu}_{\cat{A}_i}(x_i,y_i).$$ 
\item[(ii)] For all $x\in\Ob(\cat{A}_i), y\in\Ob(\cat{A}_j)$, $$\tilde{\mu}_{\sum_{i=1}^n \cat{A}_i}(x,y) = 
\begin{cases}
\tilde{\mu}_{\cat{A}_i} (x,y) & x,y \in \Ob(\cat{A}_i)\\
0 & \text{otherwise}
\end{cases}.$$ 
\end{itemize}
\end{corollary}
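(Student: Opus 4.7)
The plan is to prove both parts by induction on $n$, using the preceding Lemma as the base case $n=2$ and observing that $n=1$ is vacuous.

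For the inductive step of part (i), I would invoke the canonical isomorphism of finite categories
$$\prod_{i=1}^{n+1}\cat{A}_i \;\cong\; \left(\prod_{i=1}^n \cat{A}_i\right)\times \cat{A}_{n+1},$$
under which the tuple $(x_1,\ldots,x_{n+1})$ corresponds to the pair $\langle (x_1,\ldots,x_n),\, x_{n+1}\rangle$. Applying the preceding Lemma to the two factors on the right yields
$$\tilde{\mu}_{\prod_{i=1}^{n+1}\cat{A}_i}\bigl((x_i),(y_i)\bigr) = \tilde{\mu}_{\prod_{i=1}^n \cat{A}_i}\bigl((x_1,\ldots,x_n),(y_1,\ldots,y_n)\bigr)\cdot \tilde{\mu}_{\cat{A}_{n+1}}(x_{n+1},y_{n+1}),$$
after which the inductive hypothesis expands the first factor into $\prod_{i=1}^n \tilde{\mu}_{\cat{A}_i}(x_i,y_i)$, giving the claim.

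For part (ii), I would proceed analogously with the canonical isomorphism $\sum_{i=1}^{n+1}\cat{A}_i \cong \left(\sum_{i=1}^n \cat{A}_i\right)+\cat{A}_{n+1}$. The Lemma's three-case split, combined with the inductive hypothesis applied to the first summand, yields the required case analysis: if $x\in\Ob(\cat{A}_i)$ and $y\in\Ob(\cat{A}_j)$ with $i=j$, the value is $\tilde{\mu}_{\cat{A}_i}(x,y)$, and otherwise it is zero.

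The argument is essentially bookkeeping; the substantive content already lives in the preceding Lemma and ultimately in Proposition \ref{prop: kronecker_direct_sum}. The one point meriting care is that the canonical associativity isomorphisms for $\prod$ and $+$ of categories intertwine with the Kronecker-product and direct-sum structures on the zeta matrices, but this is immediate from associativity of $\otimes$ and $\oplus$ on matrices, so there is no real obstacle to completing the induction.
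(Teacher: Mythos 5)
Your proposal is correct and matches the paper's intent exactly: the paper gives no explicit proof, stating only that the corollary follows ``by induction'' from the preceding Lemma, which is precisely the induction you carry out (with the associativity isomorphisms supplying the bookkeeping). Nothing further is needed.
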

In turn, Corollary \ref{cor:properties-pseudomoebius} gives us a similar property of the generalized magnitude, that extends  \cite[Prop~2.6]{Leinster} to arbitrary finite categories. 
\begin{lemma}
Let $\{\cat{A}_i\}_{i=1}^n$ be a finite collection of finite categories. Then, 
\begin{itemize}
\item[(i)] $\chi(\prod_{i=1}^n \cat{A}_i) = \prod_{i=1}^n \chi(\cat{A}_i)$
\item[(ii)] $\chi(\sum_{i=1}^n \cat{A}_i) = \sum_{i=1}^n \chi(\cat{A}_i)$
\end{itemize}
\end{lemma}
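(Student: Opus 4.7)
The plan is to reduce to the binary case $n=2$ by induction and then apply the preceding corollary on $\tilde\mu_{\cat A\times \cat B}$ and $\tilde\mu_{\cat A + \cat B}$ together with Corollary \ref{cor:computation_Euler_char}. There are really two things to check: first, that the constructed categories have magnitude (existence of both a weighting and coweighting), and second, that the numerical identities hold.

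For existence in the binary case, I would use Theorem \ref{thm:formula_magnitude_matrix}: the category $\cat A$ has magnitude iff $Z_{\cat A}^+ 1_{|\Ob \cat A|}$ is a matricial weighting and $1_{|\Ob \cat A|}^T Z_{\cat A}^+$ is a matricial coweighting of $Z_{\cat A}$, i.e.\ $Z_{\cat A} Z_{\cat A}^+ 1 = 1$ and $1^T Z_{\cat A}^+ Z_{\cat A} = 1^T$. Using $Z_{\cat A \times \cat B} = Z_{\cat A} \otimes Z_{\cat B}$, the identity $Z_{\cat A\times \cat B}^+ = Z_{\cat A}^+\otimes Z_{\cat B}^+$ from Proposition \ref{prop: kronecker_direct_sum}(i), the mixed-product identity $(A\otimes B)(C\otimes D)=(AC)\otimes(BD)$, and $1_{nm} = 1_n\otimes 1_m$, I get
\begin{equation*}
Z_{\cat A\times \cat B} Z_{\cat A\times \cat B}^+ 1_{nm} = (Z_{\cat A} Z_{\cat A}^+ 1_n)\otimes (Z_{\cat B} Z_{\cat B}^+ 1_m) = 1_n \otimes 1_m = 1_{nm},
\end{equation*}
and likewise on the coweighting side. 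For the coproduct, the analogous computation uses $Z_{\cat A+\cat B} = Z_{\cat A}\oplus Z_{\cat B}$, $Z_{\cat A+\cat B}^+ = Z_{\cat A}^+\oplus Z_{\cat B}^+$, the block-diagonal product rule, and the observation that $1_{n+m}$ is the vertical concatenation of $1_n$ and $1_m$. So both $\cat A\times \cat B$ and $\cat A+\cat B$ have magnitude.

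For the numerical identities, by Corollary \ref{cor:computation_Euler_char} I have $\chi(\cat C) = \sum_{x,y\in \Ob \cat C} \tilde\mu_{\cat C}(x,y)$ for any finite $\cat C$ with magnitude. Applying part (i) of the preceding corollary,
\begin{equation*}
\chi(\cat A\times \cat B) = \sum_{\substack{x_1,y_1\in \Ob \cat A\\ x_2,y_2\in \Ob \cat B}} \tilde\mu_{\cat A}(x_1,y_1)\,\tilde\mu_{\cat B}(x_2,y_2) = \chi(\cat A)\,\chi(\cat B)
\end{equation*}
by factoring the double sum. For the coproduct, applying part (ii), the sum over $\Ob(\cat A+\cat B)\times \Ob(\cat A+\cat B)$ splits into four blocks; the two mixed blocks vanish because $\tilde\mu_{\cat A+\cat B}$ is zero there, and the remaining two give $\chi(\cat A)+\chi(\cat B)$.

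Finally, I would conclude by induction on $n$: the base $n=1$ is trivial, and the inductive step applies the binary case to $\prod_{i=1}^{n-1}\cat A_i$ and $\cat A_n$ (respectively their coproduct). I do not anticipate a serious obstacle here since all the heavy lifting has been done in Proposition \ref{prop: kronecker_direct_sum} and Corollary \ref{cor:computation_Euler_char}; the only mildly delicate point is being careful that the existence of magnitude is genuinely preserved (not merely the computational formula), which is why I separate the existence check from the numerical identity.
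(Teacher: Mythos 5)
Your argument is correct and is exactly the route the paper intends: the paper omits an explicit proof, merely noting that the pseudo-M\"obius decompositions for products and coproducts together with Equation (\ref{eq: Euler_char_pseudo}) yield the result, and your write-up fleshes out precisely that argument (including the existence check via Theorem \ref{thm:formula_magnitude_matrix} and the mixed-product/block-diagonal identities, which is the only part requiring any care).
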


\section*{Acknowledgments}

S.C. acknowledges the support of Marcella Bonsall through her SURF fellowship. J.P.V.  thanks Matilde Marcolli for several conversations about this subject. Both authors also acknowledge the multiple suggestions made by the anonymous referee, concerning mainly the validity of the results for rectangular matrices, the possibility of proving Proposition \ref{prop: kronecker_direct_sum} using the SVD, and  the validity of Theorem \ref{thm:formula_magnitude_matrix} when $M'$ is just a generalized inverse. 

\bibliographystyle{unsrt}
\bibliography{SURF_2022_BIB}

\end{document}